\documentclass[12pt]{amsart}
\pdfoutput=1

\usepackage[margin=1in,marginparwidth=0.8in,marginparsep=0.1in]{geometry}
\usepackage{times}
\usepackage{soul}

\usepackage{etoolbox}

\makeatletter
\let\old@tocline\@tocline
\let\section@tocline\@tocline
\newcommand{\subsection@dotsep}{4.5}
\newcommand{\subsubsection@dotsep}{4.5}
\patchcmd{\@tocline}
  {\hfil}
  {\nobreak
     \leaders\hbox{$\m@th
        \mkern \subsection@dotsep mu\hbox{.}\mkern \subsection@dotsep mu$}\hfill
     \nobreak}{}{}
\let\subsection@tocline\@tocline
\let\@tocline\old@tocline

\patchcmd{\@tocline}
  {\hfil}
  {\nobreak
     \leaders\hbox{$\m@th
        \mkern \subsubsection@dotsep mu\hbox{.}\mkern \subsubsection@dotsep mu$}\hfill
     \nobreak}{}{}
\let\subsubsection@tocline\@tocline
\let\@tocline\old@tocline

\let\old@l@subsection\l@subsection
\let\old@l@subsubsection\l@subsubsection

\def\@tocwriteb#1#2#3{%
  \begingroup
    \@xp\def\csname #2@tocline\endcsname##1##2##3##4##5##6{%
      \ifnum##1>\c@tocdepth
      \else \sbox\z@{##5\let\indentlabel\@tochangmeasure##6}\fi}%
    \csname l@#2\endcsname{#1{\csname#2name\endcsname}{\@secnumber}{}}%
  \endgroup
  \addcontentsline{toc}{#2}%
    {\protect#1{\csname#2name\endcsname}{\@secnumber}{#3}}}%

\newlength{\@tocsectionindent}
\newlength{\@tocsubsectionindent}
\newlength{\@tocsubsubsectionindent}
\newlength{\@tocsectionnumwidth}
\newlength{\@tocsubsectionnumwidth}
\newlength{\@tocsubsubsectionnumwidth}
\newcommand{\settocsectionnumwidth}[1]{\setlength{\@tocsectionnumwidth}{#1}}
\newcommand{\settocsubsectionnumwidth}[1]{\setlength{\@tocsubsectionnumwidth}{#1}}
\newcommand{\settocsubsubsectionnumwidth}[1]{\setlength{\@tocsubsubsectionnumwidth}{#1}}
\newcommand{\settocsectionindent}[1]{\setlength{\@tocsectionindent}{#1}}
\newcommand{\settocsubsectionindent}[1]{\setlength{\@tocsubsectionindent}{#1}}
\newcommand{\settocsubsubsectionindent}[1]{\setlength{\@tocsubsubsectionindent}{#1}}

\renewcommand{\l@section}{\section@tocline{1}{\@tocsectionvskip}{\@tocsectionindent}{}{\@tocsectionformat}}%
\renewcommand{\l@subsection}{\subsection@tocline{2}{\@tocsubsectionvskip}{\@tocsubsectionindent}{}{\@tocsubsectionformat}}%
\renewcommand{\l@subsubsection}{\subsubsection@tocline{3}{\@tocsubsubsectionvskip}{\@tocsubsubsectionindent}{}{\@tocsubsubsectionformat}}%
\newcommand{\@tocsectionformat}{}
\newcommand{\@tocsubsectionformat}{}
\newcommand{\@tocsubsubsectionformat}{}
\expandafter\def\csname toc@1format\endcsname{\@tocsectionformat}
\expandafter\def\csname toc@2format\endcsname{\@tocsubsectionformat}
\expandafter\def\csname toc@3format\endcsname{\@tocsubsubsectionformat}
\newcommand{\settocsectionformat}[1]{\renewcommand{\@tocsectionformat}{#1}}
\newcommand{\settocsubsectionformat}[1]{\renewcommand{\@tocsubsectionformat}{#1}}
\newcommand{\settocsubsubsectionformat}[1]{\renewcommand{\@tocsubsubsectionformat}{#1}}
\newlength{\@tocsectionvskip}
\newcommand{\settocsectionvskip}[1]{\setlength{\@tocsectionvskip}{#1}}
\newlength{\@tocsubsectionvskip}
\newcommand{\settocsubsectionvskip}[1]{\setlength{\@tocsubsectionvskip}{#1}}
\newlength{\@tocsubsubsectionvskip}
\newcommand{\settocsubsubsectionvskip}[1]{\setlength{\@tocsubsubsectionvskip}{#1}}

\patchcmd{\tocsection}{\indentlabel}{\makebox[\@tocsectionnumwidth][l]}{}{}
\patchcmd{\tocsubsection}{\indentlabel}{\makebox[\@tocsubsectionnumwidth][l]}{}{}
\patchcmd{\tocsubsubsection}{\indentlabel}{\makebox[\@tocsubsubsectionnumwidth][l]}{}{}

\newcommand{\@sectypepnumformat}{}
\renewcommand{\contentsline}[1]{%
  \expandafter\let\expandafter\@sectypepnumformat\csname @toc#1pnumformat\endcsname%
  \csname l@#1\endcsname}
\newcommand{\@tocsectionpnumformat}{}
\newcommand{\@tocsubsectionpnumformat}{}
\newcommand{\@tocsubsubsectionpnumformat}{}
\newcommand{\setsectionpnumformat}[1]{\renewcommand{\@tocsectionpnumformat}{#1}}
\newcommand{\setsubsectionpnumformat}[1]{\renewcommand{\@tocsubsectionpnumformat}{#1}}
\newcommand{\setsubsubsectionpnumformat}[1]{\renewcommand{\@tocsubsubsectionpnumformat}{#1}}
\renewcommand{\@tocpagenum}[1]{%
  \hfill {\mdseries\@sectypepnumformat #1}}

\let\oldappendix\appendix
\renewcommand{\appendix}{%
  \leavevmode\oldappendix%
  \addtocontents{toc}{%
    \protect\settowidth{\protect\@tocsectionnumwidth}{\protect\@tocsectionformat\sectionname\space}%
    \protect\addtolength{\protect\@tocsectionnumwidth}{2em}}%
}
\makeatother

\makeatletter
\settocsectionnumwidth{2em}
\settocsubsectionnumwidth{2.5em}
\settocsubsubsectionnumwidth{3em}
\settocsectionindent{1pc}%
\settocsubsectionindent{\dimexpr\@tocsectionindent+\@tocsectionnumwidth}%
\settocsubsubsectionindent{\dimexpr\@tocsubsectionindent+\@tocsubsectionnumwidth}%
\makeatother

\settocsectionvskip{5pt}
\settocsubsectionvskip{0pt}
\settocsubsubsectionvskip{0pt}
    
\settocsectionformat{\bfseries}
\settocsubsectionformat{\mdseries}
\settocsubsubsectionformat{\mdseries}
\setsectionpnumformat{\bfseries}
\setsubsectionpnumformat{\mdseries}
\setsubsubsectionpnumformat{\mdseries}

\let\oldtableofcontents\tableofcontents
\renewcommand{\tableofcontents}{%
  \vspace*{-\linespacing}% Default gap to top of CONTENTS is \linespacing.
  \oldtableofcontents}

\usepackage{amsmath,amssymb,amsthm,amsfonts,amscd,mathrsfs}

\usepackage[all,knot,poly]{xy}
\usepackage{tikz,tikz-cd,tikz-3dplot}
\usetikzlibrary{
  arrows.meta,
  patterns,
  knots,
  calc,
  math,
  decorations,
  decorations.markings,
  shapes.misc
}
\usepackage{graphicx,overpic,float}
\usepackage{quiver}

\usepackage{xcolor}
\usepackage{soul}
\usepackage[draft]{say}

\usepackage[hidelinks]{hyperref}

\usepackage{epigraph}

\tikzset{
  anchorbase/.style={baseline={([yshift=-0.5ex]current bounding box.center)}},
  cross/.style={
    cross out,
    draw=black,
    minimum size=2*(#1-\pgflinewidth),
    inner sep=0pt,
    outer sep=0pt
  },
  cross/.default={1pt},
  partial ellipse/.style args={#1:#2:#3}{insert path={+ (#1:#3) arc (#1:#2:#3)}}
}
\tikzstyle directed=[postaction={decorate,decoration={markings,mark=at position #1 with {\arrow{>}}}}]

\definecolor{egyptianblue}{rgb}{0.06,0.2,0.65}
\definecolor{caputmortuum}{rgb}{0.35,0.15,0.13}

\newtheorem{lemma}{Lemma}

\newtheorem{corollary}[lemma]{Corollary}
\newtheorem{theorem}[lemma]{Theorem}

\theoremstyle{definition}

\theoremstyle{remark}
\newtheorem{remark}[lemma]{Remark}

\theoremstyle{expectation}

\newcommand{\Q}{\mathbb{Q}}

\newcommand{\Z}{\mathbb{Z}}

\DeclareMathSymbol{\shortminus}{\mathbin}{AMSa}{"39}

\makeatletter
\def\namedlabel#1#2{%
  \begingroup
    #2%
    \def\@currentlabel{#2}%
    \phantomsection
    \label{#1}%
  \endgroup
}
\makeatother

\title{Welschinger invariants and the Conway polynomial}

\author{Vivek Shende}
\author{Wennan Zhang}
\date{}

\begin{document}

\begin{abstract}
% It has been shown that holomorphic curves with boundaries on Lagrangian 3-manifolds (equipped with a bounding 4-chain) can be invariantly counted using the class of their boundaries in the framed HOMFLYPT skein. 

% Here we observe that, in the setting where curves must pass through at least one boundary constraint, the framed skein module is 
Welschinger showed that counts of connected holomorphic disks with Lagrangian boundary in symplectic 6-manifolds, meeting at least one boundary constraint, can be made invariant by correcting them with counts of disconnected disks weighted by certain  ``self-linking'' numbers.  
We  show  his invariant is the lowest order term in an all-genus curve count where curves are weighted by the Conway polynomials of their boundaries.
This in turn is a specialization of the skein-valued curve count, but can be defined without the 4-chain and vector field used in that setup.  
\end{abstract}

\maketitle

%\tableofcontents

\thispagestyle{empty}

The compactified moduli spaces of holomorphic maps from curves-with-boundary themselves have boundaries.  In order to extract invariants, several methods have been introduced to cancel said boundary phenomena with each other.  
Early works used antiholomorphic involutions for this purpose \cite{Welschinger-real-4, welschinger-real-6, solomon-real}.  
In the absence of such structures, Fukaya proposed to count disks in Calabi-Yau 3-folds using his algebraic theory of bounding cochains \cite{Fukaya-disks}.  Welschinger \cite{Welschinger-six} proposed a different approach: for 3-folds,  when there is one boundary  constraint, one can  count possibly disconnected disks by a certain ``self-linking number'' and recover invariance.   
Solomon and Tukachinski \cite{Solomon-Tukachinsky} have given  a variant of Fukaya's setup which is sensible without a CY3 assumption and in arbitrary dimension.
Finally, Ekholm and the first-named-author have shown that in the CY3 setting, one can invariantly count curves {\em of all genera}, each by its class in the HOMFLYPT skein of the Lagrangian boundary \cite{SOB, ghost, bare}.  
To summarize: 

\vspace{2mm}
\begin{center}
    \begin{tabular}{|c||c|c|}
\hline
      curve counting methods& genus zero & any genus \\
     \hline \hline
  dimension 3  & linking numbers \cite{Welschinger-six} & skein valued counting \cite{SOB} \\
  \hline 
  any dimension & bounding cochains \cite{Solomon-Tukachinsky} & ??? \\
  \hline
\end{tabular}
\end{center}
\vspace{2mm}

It is of evident interest to explore the mystery in the bottom right of the table.  As a first step, it is natural to try and understand how the bounding cochain and skein counting respectively specialize to the linking number counting of Welschinger.  The first task has already been carried out by Chen \cite{chen-geometric, chen-STvW}. 
Here we take up the second: explaining how Welschinger's linking-number-weighted invariants can be recovered from the skein-valued curve counting.   

Let us briefly review the skein counting setup.  Given an oriented 3-manifold $M$, the framed HOMFLYPT skein module $\mathrm{Sk}(M)$ is, by definition, generated over the ring $\Z[a^{\pm}, z^{\pm}]$ by isotopy classes of framed links in $M$, subject to the following relations imposed when a given collection of framed links looks identical outside a small ball, and as the given depiction within it: 

\begin{align}
\vcenter{\hbox{
\begin{tikzpicture}[scale=0.5]
\draw[dotted] (0,0) circle (1);
\draw[ultra thick, ->] ({sqrt(2)/2},{-sqrt(2)/2}) -- ({-sqrt(2)/2},{sqrt(2)/2});
\draw[white, line width=2.5mm] ({-sqrt(2)/2},{-sqrt(2)/2}) -- ({sqrt(2)/2},{sqrt(2)/2});
\draw[ultra thick, ->] ({-sqrt(2)/2},{-sqrt(2)/2}) -- ({sqrt(2)/2},{sqrt(2)/2});
\end{tikzpicture}
}}
\;\;-\;\;
\vcenter{\hbox{
\begin{tikzpicture}[scale=0.5]
\draw[dotted] (0,0) circle (1);
\draw[ultra thick, ->] ({-sqrt(2)/2},{-sqrt(2)/2}) -- ({sqrt(2)/2},{sqrt(2)/2});
\draw[white, line width=2.5mm] ({sqrt(2)/2},{-sqrt(2)/2}) -- ({-sqrt(2)/2},{sqrt(2)/2});
\draw[ultra thick, ->] ({sqrt(2)/2},{-sqrt(2)/2}) -- ({-sqrt(2)/2},{sqrt(2)/2});
\end{tikzpicture}
}}
\;\;&=\;\;
z\;
\vcenter{\hbox{
\begin{tikzpicture}[scale=0.5]
\draw[dotted] (0,0) circle (1);
\draw[ultra thick, <-] ({sqrt(2)/2},{sqrt(2)/2}) arc (135:225:1);
\draw[ultra thick, ->] ({-sqrt(2)/2},{-sqrt(2)/2}) arc (-45:45:1);
\end{tikzpicture}
}}
\;, \label{eq:skeinrel1}
\\
a
\vcenter{\hbox{
\begin{tikzpicture}[scale=0.5]
\draw[dotted] (0,0) circle (1);
\end{tikzpicture}
}}
\;-\;
a^{-1}
\vcenter{\hbox{
\begin{tikzpicture}[scale=0.5]
\draw[dotted] (0,0) circle (1);
\end{tikzpicture}
}}
\;\;&=\;\;
z\;\vcenter{\hbox{
\begin{tikzpicture}[scale=0.5]
\draw[dotted] (0,0) circle (1);
\draw[ultra thick, ->] (0.5,0) arc (0:370:0.5);
\end{tikzpicture}
}}
\;, \label{eq:skeinrel2}
\\
\vcenter{\hbox{
\begin{tikzpicture}[scale=0.17]
\draw[dotted] (0, 0) circle (3);
\draw [ultra thick] (1,-1) to [out=180,in=-90] (0,0);
\draw [ultra thick, ->] (0,0) -- (0,3);
\draw [ultra thick] (1,1) to [out=0,in=90] (2,0) to [out=-90,in=0] (1,-1);
\draw [white, line width=2.5mm] (0,-3) to [out=90,in=-90] (0,0) to [out=90,in=180] (1,1);
\draw [ultra thick] (0,-3) to [out=90,in=-90] (0,0) to [out=90,in=180] (1,1);
\end{tikzpicture}
}}
\;\;&=\;\;
a\;
\vcenter{\hbox{
\begin{tikzpicture}[scale=0.17]
\draw[dotted] (0, 0) circle (3);
\draw[ultra thick, <-] (0, 3) -- (0, -3);
\end{tikzpicture}
}}
\;. \label{eq:skeinrel3}
\end{align}
The `unnormalized' HOMFLYPT polynomial \cite{HOMFLY, PT} gives an equivalence \begin{eqnarray}
\mathcal{F} : \mathrm{Sk}(S^3) & \xrightarrow{\sim} & \Z[a^{\pm}, z^{\pm}]\\
\notag \emptyset & \mapsto & 1
\end{eqnarray} 
Note that in this (un)normalization, the value on the unknot is $\mathcal{F}(\bigcirc) = \frac{a-a^{-1}}{z}$.  

Fix a Calabi-Yau 3-fold $X$, Maslov class zero Lagrangian $L \subset X$, and class $d \in H_2(X, L)$.
Choose a generic almost complex structure $J$.  We fix certain `brane data': as usual, a spin structure on $L$, but in addition, a vector field $v$ on $L$ and a 4-chain $V$ such that $\partial V = 2L$ and $V \cap \mathrm{nbd}(L) \sim \pm J\cdot v$.  
After appropriate perturbation $P$, one may ensure that all (perturbed) `bare' (without components of symplectic area zero) pseudoholomorphic curves are embedded and meet $L$ in a link framed by $v$ \cite{bare}.  We write $\mathcal{M}(X, L, d)$ for the moduli space of such, possibly disconnected, curves, and $\widehat{\mathrm{Sk}}(L) := \mathrm{Sk}(L) \otimes_{\mathbb{Z}[z^{\pm}]} \mathbb{Q}[z^{-1},z]]$. 
  The skein-valued curve-count is \cite[Def. 6.2]{SOB}: 
\begin{equation} \label{skein valued curve count}
    Z_{X, L, d} \,\,\,\, = \!\!\!\! \sum_{u \in \mathcal{M}(X, L, d)} \!\!\!\! |u| \cdot z^{-\chi(u)} \cdot a^{u \cdot V} \cdot [\partial u] \in \widehat{\mathrm{Sk}}(L) 
\end{equation}
Here, $|u| \in \mathbb{Q}$ is the weight of the point in the perturbed moduli space (the fractions come because the perturbation is a branched multisection of an orbibundle); 
$\chi(u)$ is the Euler characteristic of the domain of the map $u$; by $u \cdot V$ we mean the intersection number; and $[\partial u]$ is the class in $\mathrm{Sk}(L)$ of the image under $u$ of the boundary of the domain of $u$.  

This count is independent of generic choice of $J$ and $P$ and locally constant in $v, V$ (\cite[Thm. 6.3]{SOB} if multiple covers are excluded a priori, and in \cite[Cor. 10.2]{bare} in general).  
The basic idea behind invariance is that there are two kinds of possible degenerations in 1-parameter families: a breaking along the boundary (`hyperbolic node'), and the collapse of a boundary to a point (`elliptic node').  It turns out that these degenerations, considered together with the branches of moduli space passing through the normalizations of these maps, exactly pick out the three terms of Skein Relations \eqref{eq:skeinrel1} and \eqref{eq:skeinrel2}, respectively.  The 4-chain $V$ is used to introduce `a' parameters in \eqref{eq:skeinrel2}, and the framing vector field $v$ is introduce to cancel the resulting boundary term $u\cdot \partial V$ via Skein Relation \eqref{eq:skeinrel3}.

\vspace{2mm}

We recall Welschinger's setup.  
Fix a 3-manifold $M$ and ring $A$.  Recall that  an oriented link $K = K_1 \cup K_2 \subset M$ with  $[K_1], [K_2] = 0 \in H_1(M, A)$, has a linking number $\mathrm{Link}(K_1, K_2) \in A$, symmetric in the factors, and computed by intersecting $K_1$ with a chain bounding $K_2$.  In particular, two-component links in rational homology spheres have linking numbers valued in $\mathbb{Q}$.  

Given a link $K = K_1 \cup \cdots \cup K_n $ with all $[K_i] = 0 \in H_1(M, A)$, form the complete graph  whose vertices are the components of $K$, and label each edge by the linking number of the corresponding components; we denote this labelled graph $\Gamma_K$.  Then Welschinger defines $\mathrm{Link}(K) \in A$ by evaluating the Kirchhoff polynomial of the graph $\Gamma_A$ at the specified labels, i.e. as the sum, over spanning trees of $\Gamma_K$, of the products, over edges of the tree, of the labels of the edges.  By definition, if $K$ has one component, $\mathrm{Link}(K) = 1$.  

Fix a symplectic 6-fold $X$ and Lagrangian $L \subset X$.  Assume the map $H_2(X, L; A) \to H_1(L; A)$ is the zero map, so that for a smooth embedded holomorphic curve $\gamma$ ending on $L$, the quantity $\mathrm{Link}(\partial \gamma)$ is well defined.  Fix a homology class $d \in H_2(X, L)$.  Fix homology classes  $\xi_1, \ldots, \xi_k$ on $X$ and $\lambda_1, \ldots, \lambda_m$ on $L$, so that the moduli of disks with $k$ interior and $m$ boundary marked points, mapping to  generic cycle representatives of $\xi_i$ and $\lambda_j$, is zero dimensional.  {\em Assume in addition that the number of boundary constraints is $m \ge 1$,} and that there are no Maslov zero disks to avoid multiple covers.\footnote{The hypothesis on no Maslov zero disks can be removed using appropriate perturbations, e.g. via \cite{bare}.}  We will write $\mathbf{d}$ for the data of $d$ along with the $\xi_i$ and $\lambda_j$.  
For generic $J$, such disks are embedded, and the moduli space is transversely cut out.  We write $\mathcal{M}^{disk}(X, L, \mathbf{d})$ for the moduli space of such, possibly disconnected, disks.  Welschinger showed that the following quantity is independent of the choice of generic $J$, and the cycle representatives for the $\xi_i$ and $\lambda_j$: 
\begin{equation}
    W_{X, L, \mathbf{d}} \,\,\,\, := \!\!\!\!\!\!\!\! \sum_{u \in \mathcal{M}^{disk}(X, L, \mathbf{d})} \!\!\!\!\!\!\!\!  |u| \cdot \mathrm{Link}(\partial u)
\end{equation}
In Welschinger's setting, there are (by hypothesis) no multiple covers, so $|u| = \pm 1$.

\vspace{2mm}

In fact, when $M = S^3$, Welschinger's linking invariant has appeared previously in knot theory.  Recall that Conway's normalized  Alexander polynomial \cite{Conway-polynomial}  is the unique function 
$\nabla$ from (not framed) links in $S^3$ to $\Z[z]$ which satisfies Skein Relation \eqref{eq:skeinrel1} and takes $\nabla(\bigcirc) = 1$.  As is apparent from the skein relations, it is a specialization of the HOMFLYPT polynomial: 
\begin{equation}
    \nabla(K) = \frac{\mathcal{F}(K)}{\mathcal{F}(\bigcirc)} \bigg|_{a=1}  
\end{equation}
One can  see  from the skein relations: if $K$ is a link with $n$ components, then $\nabla(K) \in z^{n-1} \Z[z]$, and  if $K$ is a separable link (has components which map be enclosed by disjoint balls), then $\nabla(K) = 0$. 

\begin{lemma} \label{first Conway coeff classical}
    \cite{Hartley-potential, Hoste-first}
    The coefficient of $z^{n-1}$ in $\nabla(K)$ is $\mathrm{Link}(K)$.
\end{lemma}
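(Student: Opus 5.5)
We argue by induction, using the Conway skein relation \eqref{eq:skeinrel1} to reduce $K$ to a split link. Write $c(K)$ for the coefficient of $z^{n-1}$ in $\nabla(K)$, where $n$ is the number of components of $K$. We show that $c$ and $\mathrm{Link}$ satisfy the same recursion and agree on a base class of links. The induction is on $n$, and for fixed $n$ on an unlinking complexity, namely the number of crossing changes needed to turn $K$ into a split link (or, for knots, into an unknot).

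\textbf{Base cases.} For $n=1$ we have $c(K)=\nabla(K)(0)=1$: changing crossings between a knot and itself preserves the constant term, since the smoothing has $2$ components and its Conway polynomial lies in $z\Z[z]$. This equals $\mathrm{Link}(K)=1$. For a split link with $n\ge 2$ we have $\nabla=0$, so $c=0$. On the other side, the graph $\Gamma_K$ then has its vertices separated into two groups with all edges between them labelled $0$. Every spanning tree must use at least one such edge, so $\mathrm{Link}(K)=0$.

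\textbf{Inductive step.} Let $n\ge 2$. If we change a self-crossing of one component, the smoothing has $n+1$ components, so $z\nabla(\text{smoothing})\in z^{n+1}\Z[z]$. Hence $c$ is unchanged, and $\mathrm{Link}$ is unchanged because no linking numbers change. Now change a crossing between components $K_i$ and $K_j$, going from $K_+$ to $K_-$. Then $\mathrm{lk}(K_i,K_j)$ drops by $1$ and the other linking numbers are fixed. The smoothing $K_0$ has $n-1$ components, with $K_i$ and $K_j$ fused into a single component $K_{ij}$, and \eqref{eq:skeinrel1} gives $c(K_+)-c(K_-)=c(K_0)$.

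So it suffices to prove the identity
\[
\mathrm{Link}(K_+)-\mathrm{Link}(K_-)=\mathrm{Link}(K_0).
\]
The Kirchhoff polynomial is affine in the edge label $\ell_{ij}$, and the coefficient of $\ell_{ij}$ is the sum over spanning trees containing the edge $ij$. These trees correspond bijectively to spanning trees of the contracted graph $\Gamma/ij$. In $\Gamma/ij$ the merged vertex carries edge labels $\ell_{ik}+\ell_{jk}$, and these are exactly the linking numbers $\mathrm{lk}(K_{ij},K_k)$ of $K_0$. Hence the difference equals the Kirchhoff polynomial of $\Gamma_{K_0}$, which is $\mathrm{Link}(K_0)$ by definition. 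By the inductive hypothesis on $n$, $\mathrm{Link}(K_0)=c(K_0)$.

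Every link reaches a split link (unlinked components) by finitely many crossing changes, so the induction closes.

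\textbf{Main obstacle.} The one step needing care is the contraction identity above. In particular, one must check that contracting the edge $ij$ introduces no multi-edge or self-loop issues. The edges $ik$ and $jk$ merge into a single edge with the summed label, and the Kirchhoff polynomial is additive under such parallel-edge merging. One must also check the orientation convention, so that the smoothing is oriented compatibly and the linking numbers of $K_0$ really are these sums.
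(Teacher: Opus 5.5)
Your proof is correct and follows essentially the same route as the paper: induction on the number of components via Skein Relation \eqref{eq:skeinrel1}, using that both $\nabla$ and $\mathrm{Link}$ vanish on split links. The differences are minor. You verify $\mathrm{Link}(K_+)-\mathrm{Link}(K_-)=\mathrm{Link}(K_0)$ yourself by deletion--contraction of the spanning-tree sum, where the paper cites Welschinger's Lemma 2.3. You also allow self-crossing changes and check that they affect neither side, whereas the paper unlinks a component using only crossings between distinct components.
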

\begin{proof}
    The original arguments proceed by studying the Seifert matrix.  One can also give a proof using just the skein relation and facts Welschinger established directly for $\mathrm{Link}(K)$.  If $K$ has one component, it is well known (and immediate from the skein relations) that $\nabla(K)$ has constant coefficient $1$.  We induct on the number of link components.  
    Given any nontrivial link, we may unlink a component by only changing crossings where the two branches are on different components of the link; 
    Welschinger's \cite[Lemma 2.3]{Welschinger-six} shows that $\mathrm{Link}(K)$ transforms compatibly with Skein Relation \eqref{eq:skeinrel1}. Once the component is unlinked, both $\nabla(K)$ and $\mathrm{Link}(K)$ vanish. 
\end{proof}

Boyer and Lines have generalized the Conway polynomial to an invariant $\nabla_{BL}$  for links in rational homology spheres \cite{boyer-lines}.  Their invariant  takes values in $\Q[s^{\pm 1/m}, (s^{1/n} - s^{-1/n})^{-1}]_{m,n \in \Z}$,  and satisfies Skein Relation \eqref{eq:skeinrel1} where $z = s - s^{-1}$; we later also set $s=e^x$. 
The comparison to the Alexander polynomial given in \cite[Property (I)]{boyer-lines} implies that 
$z \nabla_{BL}(K) \in \Q[z^{-1},z]] \subset \Q[x^{-1}, x]]$; the symmetry property of \cite[Property (V)]{boyer-lines} implies that only either even or odd powers of $z$ or $x$ can appear.   Comparing our normalization convention to theirs, for the 3-sphere, $\nabla = z \nabla_{BL}$; and we preserve this notationally for other rational homology spheres.  

For an oriented 3-manifold $M$, we write $\mathrm{Co}(M)$ for its Conway skein: the $\Z[z]$-module generated by (not framed) links, subject to Skein Relation \eqref{eq:skeinrel1} and the relation $\bigcirc = 1$, then, for any rational homology 3-sphere, the function $\nabla = z \nabla_{BL}$ gives a map
$$\nabla: \mathrm{Co}(M) \to \Q[[z]]$$
which specializes to the Conway polynomial when $M = S^3$.

\begin{lemma}\label{lem: also for QHS}
    % Let $M$ be a closed oriented rational homology sphere and
    % $L=K_1\cup\cdots\cup K_n\subset M$ an oriented link.
    % Write
    % \[
    %     \ell_{ij}=\operatorname{lk}_M(K_i,K_j)\in\Q
    %     \qquad (i\neq j).
    % \]
    % Let $\nabla_{BL}$ denote the Boyer--Lines potential function,
    % and define
    % \[
    %     \nabla(L;z)
    %     :=z\,\nabla_{BL}(L;s(z),\ldots,s(z)),
    %     \qquad
    %     s(z)=\frac{z+\sqrt{z^2+4}}{2},
    % \]
    % so that $s(z)-s(z)^{-1}=z$ and $s(0)=1$.
    % Then
    % \[
    %     \nabla(L;z)\in z^{n-1}\Q[[z^2]]
    % \]
    % and
    % \[
    %     [z^{n-1}]\nabla(L;z)
    %     =
    %     \sum_{T\in\mathcal T_n}
    %     \prod_{\{i,j\}\in E(T)}\ell_{ij},
    % \]
    % where $\mathcal T_n$ is the set of spanning trees on
    % $\{1,\ldots,n\}$.
    % For $n=1$, the sum is understood to be $1$.
    % Thus this coefficient is $\mathrm{Link}(L)$, with the
    % spanning-tree convention used by Welschinger.
    % It need not be nonzero.
    For an $n$-component link $K = K_1 \cup \cdots K_n$  in a rational homology sphere, we have $\nabla(K) \in z^{n-1} \Q[[z]]$, and the coefficient of $z^{n-1}$ is $\mathrm{Link}(K)$.
\end{lemma}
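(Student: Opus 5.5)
We adapt the skein-theoretic proof of Lemma \ref{first Conway coeff classical} to a rational homology sphere $M$, arguing by induction on the number $n$ of components. Two properties of $\nabla = z\nabla_{BL}$ on $\mathrm{Co}(M)$ are needed, both drawn from \cite{boyer-lines}: it satisfies Skein Relation \eqref{eq:skeinrel1}, and it has known values on a suitable class of base links. For the base case $n=1$ we use the comparison with the Alexander polynomial \cite[Property (I)]{boyer-lines}. For a knot $K$ in a rational homology sphere, the Alexander polynomial normalized via the Torres-type conditions is $1$ at $t=1$, up to the factor accounting for $|H_1(M)|$ and the order of $[K]$, which the Boyer--Lines normalization absorbs. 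We expect this to give $\nabla(K)\in 1+z\Q[[z]]$, so that the constant term is $1=\mathrm{Link}(K)$. Checking precisely how the normalization constants cancel, so that the constant term is exactly $1$ and not, for example, a ratio of torsion orders, is the first delicate point. If the direct comparison is awkward, we would instead compute $\nabla$ of a single knot in each homotopy class via a surgery description and then reduce an arbitrary knot to it by crossing changes, using the skein relation together with the induction below.

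For the inductive step, let $K$ have $n\ge 2$ components, and consider a crossing change between two strands lying on different components $K_i$ and $K_j$. Skein Relation \eqref{eq:skeinrel1} gives $\nabla(K_+)-\nabla(K_-)=z\nabla(K_0)$, where $K_0$ has $n-1$ components because $K_i$ and $K_j$ are merged. By induction $\nabla(K_0)\in z^{n-2}\Q[[z]]$, with leading coefficient $\mathrm{Link}(K_0)$. Hence the difference lies in $z^{n-1}\Q[[z]]$, and its $z^{n-1}$ coefficient is $\mathrm{Link}(K_0)$. Welschinger's \cite[Lemma 2.3]{Welschinger-six} is proved for rational linking numbers in rational homology spheres, and it gives exactly $\mathrm{Link}(K_+)-\mathrm{Link}(K_-)=\mathrm{Link}(K_0)$. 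The reason is that the crossing changes $\ell_{ij}$ by $1$ while leaving the other labels unchanged, and the spanning-tree sum is affine in $\ell_{ij}$ with slope the Kirchhoff sum of the graph obtained by contracting the edge $ij$, which is $\Gamma_{K_0}$. So both sides of the claimed identity change compatibly under inter-component crossing changes.

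It remains to find, for each $K$, a sequence of such crossing changes, possibly combined with crossing changes within a component, that leads to a link where the claim is known. Intra-component crossing changes are problematic because the resulting $K_0$ has $n+1$ components. By induction downward this is harmless for the order-$(n-1)$ claim only if we already know that $\nabla$ of an $(n+1)$-component link lies in $z^{n}\Q[[z]]$, so we would run a joint induction, or instead avoid intra-component changes altogether. The cleanest target is to move $K_n$ by inter-component crossing changes into a small ball disjoint from the rest, but this is impossible in general because the homology class and rational linking data of $K_n$ obstruct it. We therefore propose as the target a \emph{split-like} configuration: a link in which $K_n$ is isotopic to a fixed representative knot in its homotopy class, placed in a neighborhood disjoint from a standard position of $K_1\cup\cdots\cup K_{n-1}$. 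For such links we would compute $\nabla$ directly from Boyer--Lines' torsion formula, either through multiplicativity for links separated by a sphere, where both sides vanish for $n\ge2$ once the components are genuinely separable, or through their formula for links with rational linking numbers. We expect this last step, verifying the claim on a base family of non-separable but ``minimally linked'' links where the rational linking numbers are nonzero, to be the main obstacle. A fallback is to invoke the Boyer--Lines torsion/Fox-calculus expression directly: expanding the multivariable potential at $s_i=1$, the lowest-order term is a Kirchhoff/Torres determinant in the linking numbers, as in Hartley's argument \cite{Hartley-potential}, and the matrix-tree theorem identifies it with $\mathrm{Link}(K)$.
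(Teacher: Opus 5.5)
\textbf{There is a genuine gap.} It sits exactly where the paper's Remark says the skein argument of Lemma \ref{first Conway coeff classical} fails.

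Your inductive step is fine: a crossing change between distinct components changes both the $z^{n-1}$ coefficient of $\nabla$ and $\mathrm{Link}$ by $\mathrm{Link}(K_0)$. But this only reduces the lemma to a base family of links, and you never prove it on that family.

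In a rational homology sphere, crossing changes preserve the free homotopy class of each component and the residues $\ell_{ij} \bmod \Z$, which are given by the linking form on the finite group $H_1(M;\Z)$. So in, say, a lens space, the base family must contain links whose rational linking numbers are non-integral and hence nonzero. Such links are not separated by spheres, so multiplicativity says nothing. Computing the lowest-order term of $\nabla_{BL}$ on them is essentially the original problem, which you flag as the ``main obstacle'' and leave open.

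There are further problems:
\begin{itemize}
\item Moving $K_n$ to a fixed representative in its homotopy class requires self-crossing changes. Your ``joint induction'' to absorb them presupposes an order bound for $(n+1)$-component links while inducting upward on $n$, so it is circular as stated.
\item The knot case is left at ``we expect''; the paper simply reads it off from \cite[Properties (I), (II)]{boyer-lines}.
\item The fallback via a Fox-calculus/Torres determinant in the style of Hartley just asserts the conclusion. As the paper's Remark notes, making it rigorous would require computing the Alexander polynomial from a rational Seifert surface, which is not available in the literature.
\end{itemize}

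\textbf{The missing idea is to work with the multivariable potential.} For $n\ge 2$, $\nabla_{BL}(K;e^{x_1},\ldots,e^{x_n})$ lies in $\Q[[x_1,\ldots,x_n]]$ by \cite[Property (I)]{boyer-lines}. The Torres formula \eqref{torres restriction} computes its restriction to $x_n=0$ as $2\sinh(\sum_i \ell_{in}x_i)$ times the potential of $K\setminus K_n$. The multivariable Kirchhoff polynomial \eqref{multivariable link} satisfies the matching leaf-deletion identity \eqref{link restriction}.

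Now induct on $n$, with $n=2$ supplied by \cite[Property (II)]{boyer-lines}. Let $P$ be the degree $\le n-2$ part of $\nabla_{BL}-2^{n-2}\mathrm{Link}(K;x_1,\ldots,x_n)$. Then $P$ vanishes whenever any $x_i=0$, so it is divisible by $x_1\cdots x_n$, which has degree $n$; hence $P=0$.

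Setting all $x_i=x$ and using $z=2x+O(x^3)$ gives both $\nabla(K)\in z^{n-1}\Q[[z]]$ and the leading coefficient at once. This needs no crossing changes and no base computations beyond the cases $n=1,2$ read off from \cite{boyer-lines}. The single-variable $\nabla$ you work with cannot carry this argument, since it has no restriction formula to sublinks.
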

\begin{remark}
    The skein reduction argument of Lemma \ref{first Conway coeff classical} does not generalize, since one can't always reduce to an unlinked case by crossing changes.   
   If we knew  the Alexander polynomial can be computed using a rational Seifert surface, we could likely adapt \cite{Hoste-first, Hartley-potential}, but we did not find a proof of this assertion in the literature (though see \cite{cha-ko}).  An argument of Buryak \cite{buryak2011first} does generalize; we copy it here to illustrate that it only uses general properties of $\nabla_{BL}$ established in \cite{boyer-lines}. 
\end{remark}
\begin{proof}
    The case where $K$ is a knot follows from \cite[Properties (I), (II)]{boyer-lines}.  
    % First suppose that $L=K$ is a knot. By properties (I) and
    % (II) of \cite{boyer-lines},
    % \[
    %     \nabla_{BL}(K;s)
    %     =
    %     \frac{\Delta_K(s^2)}{s^d-s^{-d}},
    %     \qquad
    %     \Delta_K(1)=d,
    % \]
    % where
    % \[
    %     d=
    %     \frac{|\operatorname{Tor}H_1(M\setminus K;\Z)|}
    %          {|H_1(M;\Z)|}.
    % \]
    % Consequently,
    % \[
    %     \lim_{s\to1}
    %     (s-s^{-1})\nabla_{BL}(K;s)=1.
    % \]
    % The singularity is removable after multiplication by
    % $s-s^{-1}$. The symmetry property of $\nabla_{BL}$ then
    % gives
    % \[
    %     \nabla(K;z)\in 1+z^2\Q[[z^2]].
    % \]
    % This proves the assertion for $n=1$.
    We will use the multi-variable  (one for each component) Conway invariant.  By \cite[Property (I)]{boyer-lines} when $n \ge 2$, this satisfies   
    $\nabla_{BL}(K;e^{x_1},\ldots,e^{x_n})
   \in\Q[[x_1,\ldots,x_n]]$.
    The Torres restriction formula \cite[Property (IV)]{boyer-lines} asserts: 
    \begin{equation} \label{torres restriction}
    \nabla_{BL}(K;e^{x_1},\ldots,e^{x_{n-1}}, e^{0})
        =
        2\sinh\left(\sum_{i=1}^{n-1}\ell_{in}x_i\right)
        \nabla_{BL}(K\setminus K_n;e^{x_1},\ldots,e^{x_{n-1}}).
    \end{equation}
    The corresponding formula holds after deleting any other component as well. 
    
    Consider also the following multivariable analogue of $\mathrm{Link}(K)$, which is a homogenous polynomial of degree $n-2$: 
    \begin{equation}
    \label{multivariable link}
        \mathrm{Link}(K; x_1,\ldots,x_n)
        :=
        \sum_{T\in\mathcal T_n}
        \left(
            \prod_{\{i,j\}\in E(T)}\ell_{ij}
        \right)
        \prod_{i=1}^n x_i^{\deg_T(i)-1}.
    \end{equation}
    Here, $\mathcal{T}_n$ is the set of spanning trees of the complete graph, $\ell_{ij}$ is the linking number of $K_i$ and $K_j$, and $\deg_T(i)$ is the number of edges incident to
    vertex $i$ in $T$. 
    % Since $n\ge2$, all these exponents
    % are nonnegative. Moreover, $P_L$ is homogeneous of
    % degree $n-2$, because
    % \[
    %     \sum_{i=1}^n(\deg_T(i)-1)
    %     =2(n-1)-n=n-2.
    % \]
    It satisfies the formula:
    \begin{equation} \label{link restriction}
        \mathrm{Link}(K; x_1,\ldots,x_{n-1},0)
        =
        \left(\sum_{i=1}^{n-1}\ell_{in}x_i\right)
        \mathrm{Link}(K\setminus K_n; x_1,\ldots,x_{n-1}).
    \end{equation}
    Indeed, setting $x_n=0$ retains exactly those trees
    in which vertex $n$ is a leaf. Deleting this leaf
    gives a tree on the remaining vertices; attaching
    it to vertex $i$ contributes the factor
    $\ell_{in}x_i$.
    The corresponding formula holds upon deleting any
    component.

    We will prove:
    \begin{equation} \label{to prove}
    \nabla_{BL}(K;e^{x_1},\ldots,e^{x_n})
   \equiv 2^{n-2} \mathrm{Link} (K;x_1,\ldots,x_n) \pmod{(x_1, \ldots, x_n)^{n-1}} 
   \end{equation}
    Since $z = 2x + O(x^3)$, upon setting $x_1 = \ldots = x_n = x$, we recover the statement of the theorem. 
   
    For $n = 2$, Equation \eqref{to prove}  follows from \cite[Property (II)]{boyer-lines}.
    Now suppose $n\ge3$, and we have established \eqref{to prove}  for links with
    $n-1$ components.  Let $P(x_1, \ldots, x_n)$ be the degree $\le n-2$ part of $\nabla_{BL}(K;e^{x_1},\ldots,e^{x_n}) - 2^{n-2} \mathrm{Link} (K;x_1,\ldots,x_n)$.  
    Comparing  Equations \eqref{torres restriction} and \eqref{link restriction}, we see that $P$ vanishes
     upon setting any of the $x_i$ to zero, so is divisible by $x_1 \ldots x_n$.  Since $P$ has degree $\le n-2$, we conclude $P = 0$, and hence have established \eqref{to prove}.  
\end{proof}

Thus we may rewrite Welschinger's invariant as follows: 

\begin{corollary} \label{cor: Welschinger rewritten}
    When $L$ is a rational homology sphere, 
    $$
    W_{X, L, \mathbf{d}} = \mbox{Coefficient of } z^{-1} \mbox{ in } \left[ \sum_{u \in \mathcal{M}^{disk}(X, L, \mathbf{d})} \!\!\!\!\!\!\!\!  |u| \cdot z^{-\chi(u)} \cdot \nabla(\partial u)\right]$$
\end{corollary}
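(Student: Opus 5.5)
The identity can be checked term by term: I will show that each possibly disconnected disk $u$ contributes the same amount to both sides.

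Fix $u \in \mathcal{M}^{disk}(X, L, \mathbf{d})$ and suppose its domain is a disjoint union of $n$ disks, so $\chi(u) = n$. Its boundary $\partial u = K_1 \cup \cdots \cup K_n$ is then an $n$-component link in $L$, with components ordered arbitrarily. Since $L$ is a rational homology sphere, every $[K_i]$ vanishes in $H_1(L;\Q)$, and so $\mathrm{Link}(\partial u)$ and $\nabla(\partial u)$ are both defined. The link is unframed, which is harmless because $\nabla$ lives on the Conway skein $\mathrm{Co}(L)$.

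By Lemma \ref{lem: also for QHS},
\[ \nabla(\partial u) = \mathrm{Link}(\partial u)\, z^{n-1} + (\text{higher powers of } z), \]
with only nonnegative powers beyond $z^{n-1}$. Multiplying by $z^{-\chi(u)} = z^{-n}$ gives
\[ z^{-\chi(u)}\nabla(\partial u) \in \mathrm{Link}(\partial u)\, z^{-1} + \Q[[z]]. \]
So the coefficient of $z^{-1}$ in this term is exactly $\mathrm{Link}(\partial u)$. Multiplying by $|u|$ and summing over $u$ yields $W_{X,L,\mathbf{d}}$.

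Two points of bookkeeping make the exchange of summation and coefficient extraction legitimate.

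\begin{itemize}
\item Under the standing hypotheses (generic $J$, zero-dimensional transversely cut out moduli), $\mathcal{M}^{disk}(X, L, \mathbf{d})$ is finite. The condition $m \ge 1$ bounds the number of components, and Gromov compactness gives finiteness in the class $d$. Hence the bracketed sum is a finite sum in $\Q[z^{-1}, z]]$, and taking the $z^{-1}$ coefficient is linear.
\item The case $n = 1$ is consistent: $\mathrm{Link}$ of a knot is $1$ by definition, and the constant term of $\nabla$ of a knot is $1$ by the knot case in Lemma \ref{lem: also for QHS}.
\end{itemize}

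I do not expect a real obstacle here. The only delicate point is that $\nabla$ of an $n$-component link has no terms below $z^{n-1}$; without this, lower-order terms would pollute the $z^{-1}$ coefficient. That vanishing is exactly the content of Lemma \ref{lem: also for QHS}, so it can be taken as given.
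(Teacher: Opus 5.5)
Your proposal is correct and is essentially the paper's own proof: a union of $n$ disks has $\chi(u)=n$, and Lemma \ref{lem: also for QHS} then shows that the $z^{-1}$ coefficient of $z^{-n}\nabla(\partial u)$ is $\mathrm{Link}(\partial u)$. One small slip in your bookkeeping remark: the bound on the number of components comes from rigidity (each component must carry a constraint) or from energy, not from the hypothesis $m\ge 1$, but this does not affect the argument.
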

\begin{proof}
    When $\partial u$ has $n$ components, then since the domain of $u$ is a union of disks, $\chi(u) = n$.  So by Lemma \ref{lem: also for QHS}, the $z^{-1}$ term of $z^{-\chi(u)} \nabla(\partial u)$ is $\mathrm{Link}(\partial u)$.
\end{proof}

It is now clear that in order to relate the skein-valued curve count \eqref{skein valued curve count} to Welschinger's invariant (at least for $L$ a rational homology sphere), we should like to set $a=1$, take the Conway polynomial, and take the coefficient of $z^{-1}$.

There is an apparent difficulty: for any knot or link the {\em unnormalized} HOMFLYPT invariant has $\mathcal{F}(K)|_{a=1} = 0$; correspondingly, $\nabla$ does not factor through $\mathrm{Sk}(M)$. 
From the point of view of skein theory, it is obvious how to fix this: take instead the variant $\mathrm{Sk}'(M)$ which is  generated by framed links {\em with at least one component} and where relation \eqref{eq:skeinrel2} is only imposed when it makes sense, i.e. we  only may evaluate an unlinked unknot component to $(a-a^{-1})/z$ if there is at least one other component of the link.  There is evidently a map: 
\begin{eqnarray} \label{conway specialization}
    \mathrm{Sk}'(M)|_{a=1} & \xrightarrow{\sim} &
    \mathrm{Co}(M) \otimes_{\Z[z]} \Z[z^{\pm}] \\
    \notag \bigcirc & \mapsto & 1. 
\end{eqnarray}  
We may  compose this with the Conway polynomial $\nabla$ when $M$ is a rational homology 3-sphere.

\vspace{2mm}
In the original setting of \cite{SOB, bare}, the skein-counting invariant cannot generally be made to take values in $\mathrm{Sk'}(L)$, because e.g. a curve with a unique boundary component may see said component contract in a 1-parameter family.   
But in the presence of at least one boundary constraint: 

\begin{theorem} \label{thm: skein counting with boundary constraint}
    Let $X$ be a symplectic 3-fold and $L\subset X$ a spin Lagrangian; choose an almost complex structure $J$ and brane data $V,v$ as usual. Fix a class $d \in H_2(X, L)$, and homology classes $\xi_1, \ldots, \xi_k$ on $X$ and $\lambda_1, \ldots, \lambda_m$ on $L$, with $m \ge 1$,  and generic chain representatives for them.   
    
   Take perturbations as in \cite{bare};  denote the moduli of bare maps from disconnected curves with marked points meeting the constraints as  
    $\mathcal{M}(X, L, \mathbf{d})$.  
    When the expected dimension of this space is zero, such curves are embedded and the moduli space is transversely cut out; define: 
    \begin{equation} \label{bmp skein valued curve count}
    Z_{X, L, \mathbf{d}} \,\,\,\, := \!\!\!\! \sum_{u \in \mathcal{M}(X, L, \mathbf{d})} \!\!\!\! |u| \cdot z^{-\chi(u)} \cdot a^{u \cdot V} \cdot [\partial u] \in \widehat{\mathrm{Sk}'}(L) 
\end{equation}
    This quantity is independent of generic choice of $J$ and perturbation and representatives for $\xi_i, \lambda_j$, and locally constant in $v, V$. 
\end{theorem}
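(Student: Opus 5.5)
We will follow the invariance argument of \cite{SOB, bare} essentially verbatim and check that every relation it uses is one that holds in $\mathrm{Sk}'(L)$. Fix two generic choices of data (almost complex structure, perturbation, cycle representatives for the $\xi_i,\lambda_j$), and connect them by a generic one-parameter family. The constraints are incorporated by working with the moduli space of maps with marked points, cut down by the evaluation maps to the chosen cycles. The parametrized moduli space is then a weighted branched one-manifold whose ends are the two zero-dimensional moduli spaces. The contribution $z^{-\chi(u)}a^{u\cdot V}[\partial u]$ is locally constant along its interior, away from the codimension-one events. As in \cite{bare}, these events are: hyperbolic boundary nodes; elliptic boundary nodes (a boundary circle shrinking to a point); boundary curves crossing $\partial V = 2L$ (equivalently, interior points passing through $L$); and the codimension-one phenomena involving the constraint cycles (a marked point moving onto a boundary of a chain representative, or two representatives passing through each other). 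The last family is handled by genericity. Homotopies of the cycle representatives alter the counts only by boundary contributions. These vanish because the representatives are cycles, exactly as in ordinary open Gromov--Witten theory, and Welschinger handles them the same way. Local constancy in $v$ and $V$ follows as in \cite[Thm.~6.3]{SOB}.

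For the hyperbolic nodes and the crossings through $L$, the local analysis of \cite{SOB, bare} shows that the jumps are precisely instances of Skein Relations \eqref{eq:skeinrel1} and \eqref{eq:skeinrel3}. These relations hold in $\mathrm{Sk}'(L)$ with no restriction. The only things to verify are that no link involved is empty and that relation \eqref{eq:skeinrel2} is never applied to a link with no other component. Both resolutions of a crossing have at least one component, so \eqref{eq:skeinrel1} is fine, and \eqref{eq:skeinrel3} does not change the number of components.

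The main step, and the one real obstacle, is the elliptic node. In a generic family a boundary component of a curve $u$ shrinks to a point $p\in L$. The jump in the count is the difference between the link with a small unknot near $p$ weighted by $a^{\pm1}$ and the link without it, matched against the $z$-weighted term. This is relation \eqref{eq:skeinrel2}, and it holds in $\mathrm{Sk}'(L)$ only if the remaining link $\partial u$ minus the shrinking circle is nonempty. Here we use $m\ge 1$. At least one boundary marked point is constrained to a generic cycle representative of some $\lambda_j$, and this marked point lies on some boundary component $C$ of the domain. We must show that $C$ is not the boundary circle that collapses, at least in a generic one-parameter family. The collapse produces a nodal curve in which a boundary component degenerates to an interior-type node mapped to $p$. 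If the marked point stayed on $C$, the limit would carry a marked point at the node, meeting $\lambda_j$ at $p$. This imposes an extra condition beyond the elliptic node itself. Since the node locus is already codimension one, such a configuration has codimension at least two and so does not occur generically. We expect to justify this by a dimension count in the Kuranishi/perturbation framework of \cite{bare}, checking that the perturbations can be chosen compatibly with the forgetful and evaluation maps at marked points. This compatibility is the most delicate technical point. With it established, every elliptic degeneration leaves at least one boundary component, namely the one carrying the constrained marked point, so relation \eqref{eq:skeinrel2} is applied legitimately in $\mathrm{Sk}'(L)$.

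Finally, the same constrained component is never empty, so every curve counted has nonempty boundary. Hence $Z_{X,L,\mathbf{d}}$ lies in $\widehat{\mathrm{Sk}'}(L)$, and all wall-crossing jumps vanish there. Summing the boundary of the one-dimensional parametrized moduli space then gives equality of the two counts, which proves invariance.
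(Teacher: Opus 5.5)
Your proposal is correct and matches the paper's proof. Like the paper, you run the invariance argument of \cite{SOB, bare}, note that Relations \eqref{eq:skeinrel1} and \eqref{eq:skeinrel3} hold without restriction in $\mathrm{Sk}'(L)$, and use that a boundary component carrying a constrained boundary marked point collapses only in codimension two, so Relation \eqref{eq:skeinrel2} is only ever applied when a nonempty link remains.

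One minor bookkeeping slip does not affect the argument. An interior point of $u$ crossing $L$ is the other branch of the elliptic-node wall, and Relation \eqref{eq:skeinrel2} is what absorbs the resulting jump in $a^{u\cdot V}$. Relation \eqref{eq:skeinrel3} instead comes from walls missing from your list: points where $v$ becomes tangent to $\partial u$, changing the framing and $u\cdot V$ simultaneously.
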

\begin{proof}
    One runs the arguments in \cite{SOB, bare}, plus standard arguments similar to those in \cite[Sec. 6.3]{bare} to guarantee that the constraints cut transversely the moduli of bare curves. 
    The sole new point is that that boundary components carrying marked points will only contract {\em in codimension two} (whereas boundary components without marked points contract in codimension one), hence do not contribute any new degenerations in the zero- and one- parameter families needed to define and prove invariance for the skein-valued counting.

    In particular, a boundary component carrying a boundary marked point, which exists by assumption, can never degenerate in a generic 1-parameter family,  so we remain throughout in $\widehat{\mathrm{Sk}'}$.
\end{proof}

We can now specialize nontrivially at $a=1$ to get an invariant valued in $\widehat{\mathrm{Co}}(L) := \mathrm{Co}(L) \otimes_{\Z[z]} \Q[z^{-1}, z]]$.  But, there remains the following problem: in $Z_{X, L, d}$, we cannot separate closed and open contributions (see \cite[Sec. 9.2]{SOB} for discussion).  In particular, contributions from curves with closed genus zero components can contribute with $z^{-k}$ for $k > 1$ (though $k$ is bounded for fixed $d$), and e.g. a curve with one closed genus zero component and one genus one component with one boundary will contribute a $z^{-1}$ term.  

However, if we count in the Conway skein from the beginning, we may define a purely open count, and moreover do not need the 4-chain $V$ and vector field $v$:

\begin{theorem} \label{thm: open Conway skein counting}
    In the setting of Theorem \ref{thm: skein counting with boundary constraint}, but without fixing $V, v$, 
     Let 
    $\mathcal{M}^{open}(X, L, \mathbf{d})$ be the moduli of bare maps from potentially disconnected domains of any  genus, all of whose components have boundary.  Then the following is well defined and independent of $J$ and perturbation and representatives for the $\xi_i, \lambda_j$: 

    \begin{equation} \label{open Conway count}
    Y_{X, L, \mathbf{d}} \,\,\,\, = \!\!\!\! \sum_{u \in \mathcal{M}^{open}(X, L, \mathbf{d})} \!\!\!\!\!\!\!\! |u| \cdot z^{-\chi(u)} \cdot [\partial u] \in \widehat{Co}(L) 
\end{equation}
\end{theorem}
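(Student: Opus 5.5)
We will follow the invariance argument of \cite{SOB, bare} as in Theorem \ref{thm: skein counting with boundary constraint}: connect two choices of $(J, P, \text{representatives})$ by a generic 1-parameter family and show that the count $Y$ does not jump in $\widehat{\mathrm{Co}}(L)$. Well-definedness at a generic parameter is exactly as before, since bare curves of expected dimension zero are embedded and transversely cut out. The constraints are handled as in the proof of Theorem \ref{thm: skein counting with boundary constraint}, using \cite[Sec. 6.3]{bare}. So everything rests on classifying the codimension-one degenerations of curves in $\mathcal{M}^{open}$ and checking that each causes no jump.

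First, the hyperbolic node. A boundary arc of $u$ crosses itself or another boundary arc, and the three nearby bare curves have boundaries that differ exactly as the three terms of Skein Relation \eqref{eq:skeinrel1}. The usual bookkeeping applies. When two boundary strands meet, the curve on the smoothed side has Euler characteristic lowered by one, which accounts for the factor $z$. The orientation signs from \cite{SOB} give the difference of the two crossing terms. Since \eqref{eq:skeinrel1} holds in $\mathrm{Co}(L)$ with no framing or $a$-dependence, the jump is zero there. The smoothed curve still has boundary on every component, since smoothing does not create closed components, so all three curves lie in $\mathcal{M}^{open}$.

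Second, the elliptic node: a boundary component shrinks to a point. If that component carries a boundary marked point, this happens only in codimension two, as in the proof of Theorem \ref{thm: skein counting with boundary constraint}, so it is absent from generic 1-parameter families. The dangerous case is a component without marked points. Here the degenerate curve may lose its last boundary component on some connected piece of the domain. That piece becomes closed and so leaves $\mathcal{M}^{open}$. This is the main obstacle, and it is precisely the mixing of closed and open contributions that we must rule out.

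For the elliptic node we would argue as follows. In \cite{SOB} the elliptic wall contributes a term equal to the left side of \eqref{eq:skeinrel2} minus its right side. The two sides of the wall are the curve with a small unknotted boundary circle, and the curve with that disk capped off, with the factor $z$ coming from the Euler characteristic. The $a^{\pm1}$ factors come from the 4-chain $V$, via the intersection of the capping disk with $V$ near $L$.

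Now work at $a=1$ in $\mathrm{Co}$, where a small unlinked unknot with at least one other component present evaluates to $1$. In the case where the domain component keeps another boundary circle, the relation needed is $z\cdot(\text{curve}\cup\bigcirc)$ against the capped curve. This closes up because in $\mathrm{Sk}'|_{a=1}\to\mathrm{Co}$ the relation $(a-a^{-1})\,\cdot = z\,\bigcirc\cdot$ becomes $0=z\cdot 0$ after sending $\bigcirc\mapsto 1$? That check does not close. We will instead show directly that the moduli count on the two sides agrees with zero $a$-weight. We use that $V$ is not needed: at $a=1$ the framing and 4-chain data enter only through $a$, so they drop out of the comparison.

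In the other case, the shrinking circle is the only boundary of its domain component. The capped curve is then closed and excluded from $\mathcal{M}^{open}$. So we must show that the curves on the two sides of the wall contribute equal amounts. We would try to prove that these contributions cancel in pairs via the orientation analysis of \cite{bare}. Alternatively, we would argue that this degeneration again occurs only in codimension two when $m\ge1$ forces another constrained boundary elsewhere; this is the step whose details we expect to require the most care.

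Finally, independence of $V, v$ is automatic, since neither appears in \eqref{open Conway count}. Independence of the constraint representatives follows by the same wall-crossing analysis applied to families of representatives.
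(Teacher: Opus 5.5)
Your hyperbolic-node analysis is fine. The gap is the elliptic node, which is the entire content of this theorem: you leave it unresolved, and the obstruction you hit comes from a false claim about the Conway skein. A small split unknot, together with at least one other component, does not evaluate to $1$ in $\widehat{\mathrm{Co}}(L)$; it kills the class. To see this, apply Skein Relation \eqref{eq:skeinrel1} to a curl on a strand of $K$. Positive and negative curls are isotopic as unframed links, so this gives $z\,[K\sqcup\bigcirc]=0$. Equivalently, Skein Relation \eqref{eq:skeinrel2} at $a=1$ in $\mathrm{Sk}'$ reads $0=z\,[K\sqcup\bigcirc]$. Since $z$ is invertible in $\widehat{\mathrm{Co}}(L)$, we get $[K\sqcup\bigcirc]=0$ for every nonempty $K$. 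This is the missing idea. Your ``$0=z\cdot 0$'' has it backwards: the relation is precisely what forces the right-hand side to vanish.

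You also misdescribe the wall. Near an elliptic node the parametrized moduli space has two branches:
\begin{itemize}
\item the capped curve, with no extra boundary circle, which exists on \emph{both} sides and simply passes through $L$ at an interior point (in \cite{SOB} only its $a$-weight changes, via $V$);
\item the curve with the small boundary circle, which exists on one side only and dies at the wall.
\end{itemize}
So you never compare ``small-circle curve'' against ``capped curve''; that would demand the false identity $z[K\sqcup\bigcirc]=[K]$. With the correct picture, both of your cases close at once:
\begin{itemize}
\item Without $V$, the capped branch contributes the same $|u|\,z^{-\chi}[\partial u]$ on both sides if it lies in $\mathcal{M}^{open}$. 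If capping created a closed component, it is absent on both sides.
\item The dying branch contributes a multiple of $[K\sqcup\bigcirc]$, which is $0$. Here $K\neq\emptyset$ because the boundary component carrying a marked point differs from the collapsing one; it exists since $m\ge 1$ and cannot collapse in codimension one.
\end{itemize}
No orientation pairing is needed. Your proposed alternative is also false: a boundary circle without marked points collapses in codimension one regardless of the constraints on other components. This is exactly the paper's argument, which it traces back to \cite[Sec.~3.3.2]{Welschinger-six}.
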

\begin{proof}
    In the proof of invariance of skein-valued curve counting in \cite[Thm. 6.3]{SOB}, the 4-chain $V$ and Skein Relation \eqref{eq:skeinrel2} are needed to deal with the boundary of moduli arising from an elliptic node degeneration; the framing vector field $v$ and Skein Relation \eqref{eq:skeinrel3} are then introduced to cancel a  boundary term newly introduced by the 4-chain.  However, if we count in the Conway skein, then whenever an elliptic node degeneration happens, the collapsing boundary is already an unlinked unknot, hence the contribution of said curve was already zero.  (This is essentially how Welschinger dealt with such degenerations \cite[Sec. 3.3.2]{Welschinger-six}.) 
\end{proof}

\begin{corollary}
    When $L$ is a rational homology sphere, 
    $W_{X, L, \mathbf{d}}$ is the coefficient of $z^{-1}$ in $\nabla(Y_{X, L, \mathbf{d}})$. 
\end{corollary}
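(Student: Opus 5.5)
The plan is to compare the $z^{-1}$ coefficient of $\nabla(Y_{X,L,\mathbf{d}})$ with the expression for $W_{X,L,\mathbf{d}}$ in Corollary \ref{cor: Welschinger rewritten}, by sorting the open curves by Euler characteristic and number of boundary components. Each $u \in \mathcal{M}^{open}(X,L,\mathbf{d})$ is a finite disjoint union of connected components $u_1, \ldots, u_r$, and each $u_i$ has genus $g_i \ge 0$ and $h_i \ge 1$ boundary circles (all components have boundary by definition of $\mathcal{M}^{open}$). Then $\chi(u) = \sum_i (2 - 2g_i - h_i)$, while $\partial u$ is a link with $n = \sum_i h_i$ components. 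By Lemma \ref{lem: also for QHS}, $\nabla(\partial u) \in z^{n-1}\Q[[z]]$. So the contribution $z^{-\chi(u)}\nabla(\partial u)$ lies in $z^{\,n-1-\chi(u)}\Q[[z]]$, and
\[
n - 1 - \chi(u) = -1 + \sum_i \bigl(2g_i + 2h_i - 2\bigr) \ge -1 .
\]

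The lowest exponent $-1$ is attained exactly when every $g_i = 0$ and $h_i = 1$, i.e.\ when $u$ is a disjoint union of disks. Every other curve has all its exponents at least $+1$ (in fact the parity bookkeeping gives odd shifts, but we only need $\ge 0$), so it contributes nothing to the $z^{-1}$ coefficient. Hence the coefficient of $z^{-1}$ in $\nabla(Y_{X,L,\mathbf{d}})$ equals the coefficient of $z^{-1}$ in
\[
\sum_{u \in \mathcal{M}^{disk}} |u|\, z^{-\chi(u)}\,\nabla(\partial u).
\]
This uses that the disk locus of $\mathcal{M}^{open}(X,L,\mathbf{d})$ is exactly $\mathcal{M}^{disk}(X,L,\mathbf{d})$. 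Corollary \ref{cor: Welschinger rewritten} identifies this with $W_{X,L,\mathbf{d}}$, where the constant term of $\nabla$ on a disk union with $n$ boundary circles is $\mathrm{Link}(\partial u)$. One point to check is that $\nabla$ extends $z^{-1}$-adically to $\widehat{\mathrm{Co}}(L)$. This holds because each $\nabla(\partial u)$ is a genuine power series and, for fixed $\mathbf{d}$, only finitely many curves contribute to each power of $z$.

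The main obstacle is matching the moduli spaces and weights. Welschinger's count uses a generic $J$ under a no-Maslov-zero hypothesis, whereas $Y$ is defined with the perturbations of \cite{bare}. I would argue that, under Welschinger's hypotheses, one can take the perturbation to be trivial on the disk stratum. The reason is that disks have no multiple covers or ghost components there, so bare disk configurations are exactly Welschinger's embedded, transversely cut out disks, with $|u| = \pm 1$ and the same orientation signs (both use the spin structure on $L$). Since Theorem \ref{thm: open Conway skein counting} says $Y$ is independent of $J$ and perturbation, we may compute it with this choice. Any extra contributions the perturbation introduces elsewhere (higher genus or more boundary components) sit in exponents $\ge 1$ by the count above, so they do not affect the $z^{-1}$ coefficient.
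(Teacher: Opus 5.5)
Your proposal is correct and follows the paper's proof. The lowest exponent of $z^{-\chi(u)}\nabla(\partial u)$ is $n-1-\chi(u)\ge -1$, with equality exactly for unions of disks, and Corollary \ref{cor: Welschinger rewritten} then identifies the surviving $z^{-1}$ coefficient with $W_{X,L,\mathbf{d}}$. Your extra remarks on matching Welschinger's generic-$J$ disk count with the perturbed disk stratum, and on convergence, spell out points the paper leaves implicit; there are two small slips there: the completion is $z$-adic rather than $z^{-1}$-adic, and what equals $\mathrm{Link}(\partial u)$ is the $z^{n-1}$ coefficient of $\nabla(\partial u)$, not its constant term.
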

\begin{proof}
    Any term in $z^{-\chi(u)} \nabla(\partial u)$ has degree $\#\mathrm{components}(\partial u) - \chi( u) - 1 \ge -1$, with equality only if all components are disks.  The result  follows by comparing Theorem \ref{thm: open Conway skein counting} with Corollary \ref{cor: Welschinger rewritten}.
\end{proof}

\begin{remark}
    The existence of $Y_{X, L, \mathbf{d}}$ is also interesting in the Calabi-Yau case.  Here we may retain zero dimensional moduli by taking e.g. a single boundary constraint through a 2-cycle on $L$.  The `divisor relation' as usual will identify this with some multiple of the count without marked points, a priori valued in $\mathrm{Sk}$ rather than $\mathrm{Sk}'$.  Note however that if the boundary meets said nontrivial 2-cycle homologically nontrivially, then it must be nonzero in the skein, so this component of the invariant could always have been lifted to $\mathrm{Sk}'$ and specialized at $a=1$.  Indeed, this has been done on several occasions, with various ad-hoc reasoning 
    \cite[Sec. 7.2]{ekholm-longhi-park-shende} \cite[Sec. 4.1]{hu-shende}. 
\end{remark}

\begin{remark}
    The skein-valued curve counting is consistent with Witten's view  \cite{witten-chern-string} that the topological A-model couples with the Chern-Simons theory on the Lagrangian branes; in this correspondence, one should set 
    $a=e^{Ng_s/2}$ and $z = e^{g_s/2} - e^{-g_s/2}$ if the Lagrangian carries a ``stack of $N$ branes''.  Thus the fact that some part of the open string theory survives upon setting $a=1$, i.e. $N =0$, seems mysterious from a physical point of view. 
\end{remark}

Finally, let us sum over possible $\mathbf{d} = (\{\xi_1, \ldots, \xi_k\}, \{\lambda_1, \ldots, \lambda_m\}, d)$ where the $\xi_i \in H_*(X)$
and $\lambda_j \in H_*(L)$ and $d \in H_2(X, L)$; we allow $k =0 $ but require $m \ge 1$.  We write $\mathbf{Q}^d$ for the corresponding element of the group ring; we write $\boldsymbol{\xi} = \frac{1}{k!} \xi_1 \cdots \xi_k$ and $\boldsymbol{\lambda} = \frac{1}{m!} \lambda_1 \cdots \lambda_m$, where the product is in the polynomial ring on (infinitely many) variables corresponding to elements of $H_*(X)$ and $H_*(L)$ respectively. 
We write $$W_{X, L} := \sum_{\mathbf{d}} W_{X, L, \mathbf{d}} \cdot  \mathbf{Q}^d \cdot \boldsymbol{\xi} \cdot \boldsymbol{\lambda}$$ 
and similarly for $Y, Z$.  

We consider the closed bare curve count, as defined in \cite[Sec. 9.2]{bare}, but now with marked point constraints, and denote it 
$Z_{X, \mathbf{d}}$ where now $\mathbf{d} = (\{\xi_1, \ldots, \xi_k\}, d)$.  We assemble these into $Z_X = \sum_d Z_{X, d} \cdot \mathbf{Q}^d \cdot \boldsymbol{\xi}$.  
Essentially by definition, 
\begin{equation}
    Y_{X, L} \cdot Z_X = Z_{X, L}|_{\widehat{Co}(L)}.
\end{equation}
Now assuming $L$ is a rational homology sphere, and collecting our results, we have: 
\begin{equation}
    W_{X, L} = \mbox{Coefficient of } z^{-1} \mbox{ in } \nabla(Y_{X, L}) = \mbox{Coefficient of } z^{-1} \mbox{ in }  \frac{\nabla(Z_{X,L})}{Z_X}.
\end{equation}
Thus  the Welschinger invariant is recovered from the skein-valued curve count. 

\vspace{2mm}
\noindent {\bf{Acknowledgements.}} 
We are supported by Villum Fonden Villum Investigator 37814. 

\bibliographystyle{plain}
\bibliography{ref}
\end{document}